\title[Automorphism groups of leafless metric graphs]
{Upper bounds of orders of automorphism groups of leafless metric graphs}
\author{Yusuke Nakamura}
\address{Graduate School of Mathematical Sciences, 
the University of Tokyo, 3-8-1 Komaba, Meguro-ku, Tokyo 153-8914, Japan.}
\email{nakamura@ms.u-tokyo.ac.jp}
\author{Song JuAe}
\address{Tokyo Metropolitan University 1-1 Minami-Ohsawa, Hachioji, Tokyo, 192-0397, Japan.}
\email{song-juae@ed.tmu.ac.jp}
\subjclass[2020]{Primary 14T20; Secondary 05C99}
\keywords{tropical curves, automorphisms of graphs, tropical analogue of the theorem of Hurwitz}
\newtheorem{dfn}{Definition}
\newtheorem{thm}[dfn]{Theorem}
\newtheorem{prop}[dfn]{Proposition}
\newtheorem{lemma}[dfn]{Lemma}
\newtheorem{claim}[dfn]{Claim}
\newtheorem{rem}[dfn]{Remark}
\def \Gamma {\varGamma}
\begin{document}

\maketitle

\begin{abstract}
We prove a tropical analogue of the theorem of Hurwitz: a leafless metric graph of genus $g \ge 2$ has at most $12$ automorphisms when $g = 2$; $2^g g!$ automorphisms when $g \ge 3$.
These inequalities are optimal; for each genus, we give all metric graphs which have the maximum numbers of automorphisms.
The proof is written in terms of graph theory.
\end{abstract}

\section{Introduction}
	\label{section1}

\newcounter{num}
\setcounter{num}{4}
In classical algebraic geometry, the theorem of Hurwitz says that an algebraic curve of genus $g \ge 2$ has at most $84(g - 1)$ automorphisms (cf. \cite[Ex. 2.5. in Chapter \Roman{num}]{Hartshorne}).
In this paper, we will prove a tropical analogue of this theorem.

Tropical geometry is, roughly speaking, an algebraic geometry over the tropical semifield $\boldsymbol{T} := (\boldsymbol{R} \cup \{ -\infty \}, {\rm max}, +)$ (cf. \cite{MS} for an introduction to tropical geometry).
The process of passing from classical arithmetic (resp. classical algebro-geometric objects) to tropical arithmetic (resp. tropical objects) is referred to as tropicalization.

Tropicalizations of algebraic curves have abstract tropical curve structures.
An abstract tropical curve is an extension of a metric graph.
A \textit{metric graph} is defined as the underlying metric space of the pair of an unweighted, undirected, finite, connected nonempty multigraph $G$ which may have loops and a length function $l : E_G \to \boldsymbol{R}_{>0}$ with an identification of each edge $e$ of $G$ with the closed interval $[0, l(e)]$, where $E_G$ denotes the set of edges of $G$.
For an abstract tropical curve $\Gamma$, we allow $l$ to take the value $\infty$ on only edges incident to leaves.
Each point $\infty \in [0, \infty]$ must be identified with a leaf, where $[0, \infty]$ is the one point compactification of $[0, \infty)$.
If $l$ takes the value $\infty$, then $\Gamma$ is no longer a metric space, but a topological space (cf. \cite{JuAe} for more details).

An abstract tropical curve $\Gamma$ has a \textit{genus} in the usual topological sense.
It coincides with $\# E_G - \# V_G + 1$ for any pair $(G, l)$ defining $\Gamma$, where $V_G$ denotes the set of vertices of $G$.
Nonnegative integer $\# E_G - \# V_G + k$ for a graph $G$ with $k$ connected components is sometimes called the \textit{(first) Betti number} of $G$ (cf. \cite{BCF}).

In the category of abstract tropical curves, morphisms between abstract tropical curves are (finite) harmonic morphisms.
An automorphism of a metric graph $\Gamma$, i.e., a finite harmonic morphism $\Gamma \to \Gamma$ of degree one is an isometry $\Gamma \to \Gamma$, and vice versa.
For a metric graph $\Gamma$, we write its automorphism group as $\operatorname{Aut}(\Gamma)$, which coincides with the isometry group of $\Gamma$.

The following is our main theorem:

\begin{thm}
	\label{thm:tropaut}
Let $\Gamma$ be a leafless metric graph of genus $g \ge 2$.
Then the inequality
\[
\# \operatorname{Aut}(\Gamma) \le
\begin{cases}2^g g! & \text{if $g \ge 3$,}\\
 12 & \text{if $g = 2$}\end{cases}
\]
holds.
Furthermore, the inequality becomes an equality if and only if $\Gamma$ is one of the following:
\begin{itemize}
\item $\Gamma$ is defined by the pair of $G_{{\rm banana}, g}$ and some $l$ whose image is one point with $g = 2, 3$.
\item $\Gamma$ is defined by the pair of $G_{{\rm bouquet}, g}$ and some $l$ whose image is one point with $g \ge 3$.
\item $\Gamma$ is defined by the pair of $G_{{\rm lollipop}, g}$ and some $l$ such that all $g$ loops have the same length and all $g$ bridges have the same length with $g \ge 3$.
\end{itemize}
\end{thm}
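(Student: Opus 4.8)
The plan is to reduce the theorem to a purely combinatorial estimate and then prove that estimate by induction on the genus together with a structural analysis of the model. First I would pass to the \emph{minimal model}: starting from any pair $(G_0,l_0)$ defining $\Gamma$, repeatedly smooth every vertex of valence $2$ (merging its two incident edges into one) until every vertex has valence $\ge 3$. Since $\Gamma$ is leafless of genus $g\ge 2$, this terminates with a pair $(G,l)$ in which $V_G$ is exactly the set of points of $\Gamma$ of valence $\ne 2$. Because valence is an isometry invariant, every element of $\operatorname{Aut}(\Gamma)$ permutes $V_G$ and carries each edge isometrically onto an edge of the same length; hence $\operatorname{Aut}(\Gamma)$ is precisely the group of length-preserving combinatorial automorphisms of $G$. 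In particular $\#\operatorname{Aut}(\Gamma)\le\#\operatorname{Aut}(G)$, with equality whenever $l$ is constant on each $\operatorname{Aut}(G)$-orbit of edges. Thus it suffices to bound $\#\operatorname{Aut}(G)$ over connected multigraphs of minimal valence $\ge 3$ and genus $g$ — a finite list, since $\#V_G\le 2g-2$ and $\#E_G\le 3g-3$ — and to identify the maximizers; the length conditions in the theorem are then forced by the number of edge-orbits of each extremal graph (one orbit for the bouquet and the banana, two for the lollipop, which is why the bouquet and banana need image a single point while the lollipop needs equal loops and equal bridges).

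For the combinatorial estimate I would set up the exact sequence $1\to K\to\operatorname{Aut}(G)\to Q\to 1$, where $K$ is the pointwise stabilizer of $V_G$ and $Q\le\operatorname{Sym}(V_G)$ is the induced vertex action. Writing $m_{uv}$ for the number of edges joining distinct vertices $u,v$ and $\lambda_v$ for the number of loops at $v$, one has $\#K=\prod_{u<v}m_{uv}!\cdot\prod_v 2^{\lambda_v}\lambda_v!$, together with the genus bookkeeping $g=g_0+\sum_v\lambda_v+\sum_{u<v}(m_{uv}-1)$, where $g_0$ is the genus of the underlying simple skeleton. The three extremal families are recovered as the identities $\#K=2^g g!,\ Q=1$ (bouquet); $\#K=2^g,\ Q\cong\mathfrak{S}_g$ (lollipop); and $\#K=(g+1)!,\ Q\cong\mathbb{Z}/2$ (banana), all of which equal $2^g g!$ for $g\ge 3$, while the banana gives $12$ when $g=2$.

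The heart is the inequality $\#K\cdot\#Q\le 2^g g!$ (respectively $\le 12$ for $g=2$), which I would attack by induction on $g$. The base cases $g=2,3$ are handled by enumerating the finitely many models: for $g=2$ the only minimal models are the theta graph, the dumbbell, and the figure-eight, with automorphism orders $12,8,8$. For the inductive step with $g\ge 4$, I would choose an $\operatorname{Aut}(G)$-invariant genus-reducing operation — deleting or contracting an entire orbit of loops if any loop is present, otherwise an orbit of a maximal parallel bundle — followed by re-smoothing, and compare the factor by which $\#\operatorname{Aut}(G)$ can exceed $\#\operatorname{Aut}(G')$ for the reduced graph $G'$ against the ratio $2^g g!/2^{g'}g'!$.

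The main obstacle is that the bound is genuinely \emph{not} local: already at $g=2$ the banana exceeds $2^g g!$ because the vertex swap in $Q$ combines with the bundle permutations in $K$, so one cannot simply bound $\#K$ and $\#Q$ separately. The real difficulty is therefore controlling the product when $G$ has many mutually symmetric vertices — as in the lollipop, where $Q\cong\mathfrak{S}_g$ is large but is "paid for" by the genus carried in the loops — and showing that redistributing symmetry between $K$ and $Q$ never beats the signed-permutation count $2^g g!$; conceptually this is the assertion that the induced action on $H_1(G,\mathbb{Z})\cong\mathbb{Z}^g$ lies in a hyperoctahedral group, while the jump to $12$ at $g=2$ reflects the theta graph realizing the order-$12$ symmetry of the planar hexagonal lattice, exceeding the hyperoctahedral order $8$. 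A secondary obstacle is that the reduction operations can create leaves or new valence-$2$ vertices, so the re-smoothing must be tracked carefully; and the equality analysis — proving the three families are the \emph{only} graphs attaining the bound — requires verifying that every non-extremal reduction is strict, which is the most delicate bookkeeping in the argument.
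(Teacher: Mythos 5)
Your first reduction (pass to the model whose vertices are the points of valence $\neq 2$, then bound $\#\operatorname{Aut}(\Gamma)$ by a combinatorial automorphism count) is essentially the paper's first step, but note a convention trap: with the standard definition of a multigraph automorphism as a pair of bijections of $V_G$ and $E_G$ compatible with incidence (the definition the paper uses), an isometry reversing a loop while fixing all vertices and edges is invisible, so for the dumbbell ($g=2$) one has $\#\operatorname{Aut}(\Gamma)=8$ while $\#\operatorname{Aut}(G)=2$, and your inequality $\#\operatorname{Aut}(\Gamma)\le\#\operatorname{Aut}(G)$ fails. Your factor $\prod_v 2^{\lambda_v}\lambda_v!$ shows you are implicitly counting loop reversals (a half-edge/dart convention); that is workable but must be made explicit and then the combinatorial bound must be proved for that larger group. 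The paper instead inserts a vertex at the midpoint of every loop, so its model is loopless and the naive definition gives an injection $\operatorname{Aut}(\Gamma)\to\operatorname{Aut}(G)$. Also, your claim that the banana satisfies $\#K\cdot\#Q=2^g g!$ for all $g\ge 3$ is false: the banana gives $2(g+1)!$, which equals $2^g g!$ only at $g=3$ (consistently with the theorem, which admits the banana only for $g=2,3$).

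The genuine gap is the inductive step, which is only a sketch and whose proposed reduction degenerates exactly on the graphs that matter most. If $G$ is edge-transitive (the banana, $K_4$, $K_{3,3}$, the Petersen graph, \dots), or more generally if the chosen $\operatorname{Aut}(G)$-invariant orbit of loops or parallel bundles meets every independent cycle, then deleting or contracting that orbit collapses $G$ to a tree or a point, so $\#\operatorname{Aut}(G')=1$ and the required comparison ``factor $\le 2^g g!/2^{g'}g'!$'' is literally the inequality $\#\operatorname{Aut}(G)\le 2^g g!$ you are trying to prove: the induction is circular there. Even when the reduction is nontrivial, bounding the factor is delicate because re-smoothing retracts trees: for the lollipop, deleting the orbit of loops leaves a star that retracts to a point, and the kernel of $\operatorname{Aut}(G)\to\operatorname{Aut}(G')$ has order $2^g g!$, not the naive $\prod_v 2^{\lambda_v}\lambda_v!=2^g$, because the permutations of the retracted bridges also die; so lemmas tracking retracted trees and bridges are unavoidable. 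The paper's induction is structured differently precisely to avoid both problems: it first proves the vertex-stabilizer bound $\#\operatorname{Aut}(G)_x\le 2^g g!$ by induction on the Betti number (delete $x$ and its incident edges, pass to the leafless core, with a lemma showing the retracted part contributes nothing and a lemma that contracting all bridges is injective on automorphisms), and then passes to the full group via the hand-shaking estimate that the number $l$ of maximal-degree vertices satisfies $l\le 2g-2$, giving $\#\operatorname{Aut}(G)\le l\cdot\#\operatorname{Aut}(G)_x$, plus a short case analysis for equality. Fixing a vertex makes the recursion genuinely decrease and treats the edge-transitive graphs uniformly; to complete your plan you would need a separate argument for exactly those orbit-spanning cases, which in effect forces a stabilizer bound of this kind anyway.
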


Here, an abstract tropical curve is \textit{leafless} if it has no one valent points (it is said to be \textit{minimal} in \cite{ABBR2}).
The \textit{valency} of a point $x$ of an abstract tropical curve is the minimum number of connected components of $U \setminus \{ x \}$ with all neighborhoods $U$ of $x$. 
For $g \ge 1$, $G_{{\rm banana}, g}$ denotes the multigraph that has only two vertices and $g + 1$ multiple edges between them.
For $g \ge 1$, $G_{{\rm bouquet}, g}$ denotes the graph that has only one vertex and $g$ loops incident to it.
For $g \ge 2$, $G_{{\rm lollipop}, g}$ denotes the graph obtained from $G_{{\rm bouquet}, g}$ by replacing the unique vertex with the star $S_g$ with $g$ leaves.

We note that there are no upper bounds of $\# \operatorname{Aut}(\Gamma)$ without the leafless condition in Theorem \ref{thm:tropaut}. 
Let $\Gamma$ be a metric graph of genus $g$, and let $v \in \Gamma$ be a point. 
Let $\Gamma^{\prime}$ be the metric graph obtained from $\Gamma$ by attaching $0$'s of $n$ copies of the intervals $[0, 1]$ to the point $v$. 
Then $\Gamma^{\prime}$ has genus $g$, and we have $\# \operatorname{Aut}(\Gamma ') \ge n!$ since $\operatorname{Aut}(\Gamma^{\prime})$ contains the symmetric group of degree $n$ as a subgroup. 

In tropical geometry, it is natural to deal only leafless ones.
There is an operation called \textit{tropical modification} which defines an equivalence relation on all abstract tropical curves. 
Two abstract tropical curves are equivalent to each other if and only if one of them is obtained from the other by a finite number of retractions that contract a leaf edge to its one point (cf. \cite{ABBR2}).
Hence for an equivalence class, all representatives have the same genus and there exists a unique leafless representative.
For this reason, in tropical geometry, one studies leafless metric graphs for some essential information of their equivalence classes (cf. \cite{Yamamoto} for higher dimension cases).

Theorem \ref{thm:tropaut} follows the following combinatorial proposition:

\begin{prop}
	\label{prop:aut}
Let $G$ be a connected leafless graph of Betti number $g \ge 2$. 
Then the inequality
\[
\# \operatorname{Aut}(G) \le
\begin{cases}2^g g! & \text{if $g \ge 3$,}\\
 12 & \text{if $g = 2$}\end{cases}
\]
holds. 
Furthermore, the inequality becomes an equality if and only if $G$ is one of the following: 
\begin{itemize}
\item[(A)] $G$ is a graph obtained from $G_{{\rm banana}, g}$ by subdividing all $g + 1$ edges into the same number ($\ge 1$) of edges with $g = 2,3$.
\item[(B)] $G$ is a graph obtained from $G_{{\rm bouquet}, g}$ by subdividing all $g$ loops into the same number ($\ge 2$) of edges with $g \ge 3$. 
\item[(C)] $G$ is a graph obtained from $G_{{\rm lollipop}, g}$ by subdividing all $g$ loops into the same number 
($\ge 2$) of edges and all $g$ bridges into a same number ($\ge 1$) of edges with $g \ge 3$.
\end{itemize}
\end{prop}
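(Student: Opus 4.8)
The plan is to first pass from $G$ to its minimal model. Since every automorphism preserves vertex degrees, it maps the set of degree-$2$ vertices to itself and carries maximal paths of degree-$2$ vertices to such paths; suppressing all degree-$2$ vertices therefore produces a connected graph $\bar G$ of minimum degree $\ge 3$ and the same Betti number $g$, together with a homomorphism $\operatorname{Aut}(G) \to \operatorname{Aut}(\bar G)$. I would show its kernel consists only of reflections of the cyclic paths lying over the loops of $\bar G$: a path joining two distinct branch vertices is fixed pointwise once its endpoints are fixed, while a path over a loop admits only the identity and one reflection. Hence $\# \ker \le 2^{\ell}$, where $\ell$ is the number of loops of $\bar G$, and $\# \operatorname{Aut}(G) \le 2^{\ell}\,\# \operatorname{Aut}(\bar G)$. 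Because $\bar G$ has minimum degree $\ge 3$, it has at most $2g-2$ vertices and at most $3g-3$ edges, so only finitely many $\bar G$ occur for each $g$.

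Next I would analyze $\operatorname{Aut}(\bar G)$ through the exact sequence $1 \to N \to \operatorname{Aut}(\bar G) \to Q \to 1$, where $N$ fixes every vertex and hence independently permutes the $\lambda_v$ loops at each $v$ and the $m_e$ edges in each parallel class, giving $\# N = \prod_v \lambda_v!\,\prod_e m_e!$, and $Q$ is the induced group of \emph{rigid} automorphisms of the underlying simple graph decorated by the labels $\lambda_v$ and $m_e$. The target bound $2^g g!$ is the order of the group of signed permutations of $g$ letters, and the three extremal shapes realize it in complementary ways: the bouquet concentrates all of the genus into loops at a single vertex (with $\# N = g!$ loop permutations together with the $2^g$ from the kernel), the lollipop spreads the loops one per arm but recovers $g!$ from permuting the arms inside $Q$, and the banana uses a single parallel class of $g+1$ edges together with the vertex swap.

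The heart of the argument is the combinatorial master inequality $2^{\ell}\,\# N\,\# Q \le 2^g g!$ for $g \ge 3$ (respectively $\le 12$ for $g = 2$), together with its equality analysis. Writing $\ell = \sum_v \lambda_v$, $\mu = \sum_e m_e$ and $g = \ell + \mu - \#V_{\bar G} + 1$, one sees that concentrating genus into loops or parallel classes multiplies $f := 2^{\ell}\# N\,\# Q$ by growing factorials, so the extreme cases are highly degenerate graphs; the difficulty is that the naive estimate $\# Q \le \#V_{\bar G}!$ is too weak (for the lollipop it already overshoots $2^g g!$), so $Q$ must be controlled using the degree-$\ge 3$ constraint and the labels it must preserve. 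I would run an induction on $\#V_{\bar G}$ with base case $\#V_{\bar G} = 1$ (which forces the bouquet, where equality holds), reducing a larger graph along a suitable symmetric parallel class while checking that $f$ does not increase past the target; the case $g = 2$ must be isolated, since there the banana value $2(g+1)! = 12$ exceeds $2^g g! = 8$, and $g = 3$ is the crossover where the banana and the bouquet both attain $48$. Controlling $Q$ against the genus budget without losing the equality cases is the step I expect to be the main obstacle.

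Finally, for the classification I would trace which inequalities are tight. Equality in $\# \operatorname{Aut}(G) = 2^{\ell}\# \operatorname{Aut}(\bar G)$ forces the kernel to have full order $2^{\ell}$, i.e.\ every loop of $\bar G$ is subdivided into at least two edges, and forces $\operatorname{Aut}(G) \to \operatorname{Aut}(\bar G)$ to be onto, i.e.\ edges in a common $\operatorname{Aut}(\bar G)$-orbit are subdivided into the same number of edges; equality in the master inequality pins $\bar G$ down to the banana (for $g = 2, 3$), the bouquet, or the lollipop (for $g \ge 3$). Combining these two constraints reproduces exactly the uniform-subdivision descriptions in (A), (B) and (C), completing the proof.
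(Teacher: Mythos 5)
Your reductions are sound as far as they go: the homomorphism $\operatorname{Aut}(G) \to \operatorname{Aut}(\bar G)$ to the minimal model, the kernel bound $2^{\ell}$ (paths over ordinary edges of $\bar G$ are fixed pointwise, cycles over loops admit only a reflection), the exact sequence $1 \to N \to \operatorname{Aut}(\bar G) \to Q \to 1$ with $\# N = \prod_v \lambda_v! \prod m!$, and the verification that the banana, bouquet and lollipop attain $2(g+1)!$, resp.\ $2^g g!$, are all correct. But the proof has a genuine gap at exactly the point you flag yourself: the ``master inequality'' $2^{\ell}\,\#N\,\#Q \le 2^g g!$ for every connected graph of minimum degree $\ge 3$ and Betti number $g \ge 3$ is not an auxiliary step --- it \emph{is} the proposition, restated for minimal models, and nothing in your outline proves it. The proposed induction on $\#V_{\bar G}$ is not specified: you do not say what operation is performed on ``a suitable symmetric parallel class,'' why it preserves connectivity, minimum degree $\ge 3$ and the genus bookkeeping, or why the quantity $f = 2^{\ell}\,\#N\,\#Q$ cannot jump past the target under it. As you yourself observe, no bound on $\#Q$ by a factorial in $\#V_{\bar G}$ alone can work (the lollipop already defeats it), so $Q$ must be played off against $N$, $\ell$ and the degree constraint simultaneously --- and that interplay is precisely what is missing. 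Since the classification in (A), (B), (C) is derived by ``tracing which inequalities are tight,'' it inherits the same gap.

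For comparison, the paper resolves this difficulty by a different decomposition: it first proves a fixed-vertex bound $\#\operatorname{Aut}(G)_x \le 2^g g!$ (Proposition \ref{prop:fixedpt}) by induction on the Betti number, cutting $G$ at $x$ into pieces $G_i$ and combining Lemma \ref{lem:simplify} with the factorial inequality of Lemma \ref{lem:bikkuri}; it then reduces to loopless, bridgeless $G$, bounds $\#\operatorname{Aut}(G) \le l \cdot \#\operatorname{Aut}(G)_x$ where $l \le 2g-2$ is the number of vertices of maximum degree $d$, and eliminates the near-extremal configurations (including the sporadic graphs $H_1$, $H_2$ of Figure \ref{figure:H_1,H_2} and $H$ of Figure \ref{figure:H}) by a case analysis on $d$, $k$ and the $g_i$. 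Your factorization through $\bar G$ could in principle be completed, and it would localize the problem to a finite (per genus) list of combinatorial types, but you would need to supply an argument of comparable depth for controlling $Q$; as it stands, the plan defers the entire difficulty to the one step you acknowledge you cannot yet carry out.
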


\begin{figure}[h]
\begin{center}
\includegraphics[width=0.7\columnwidth]{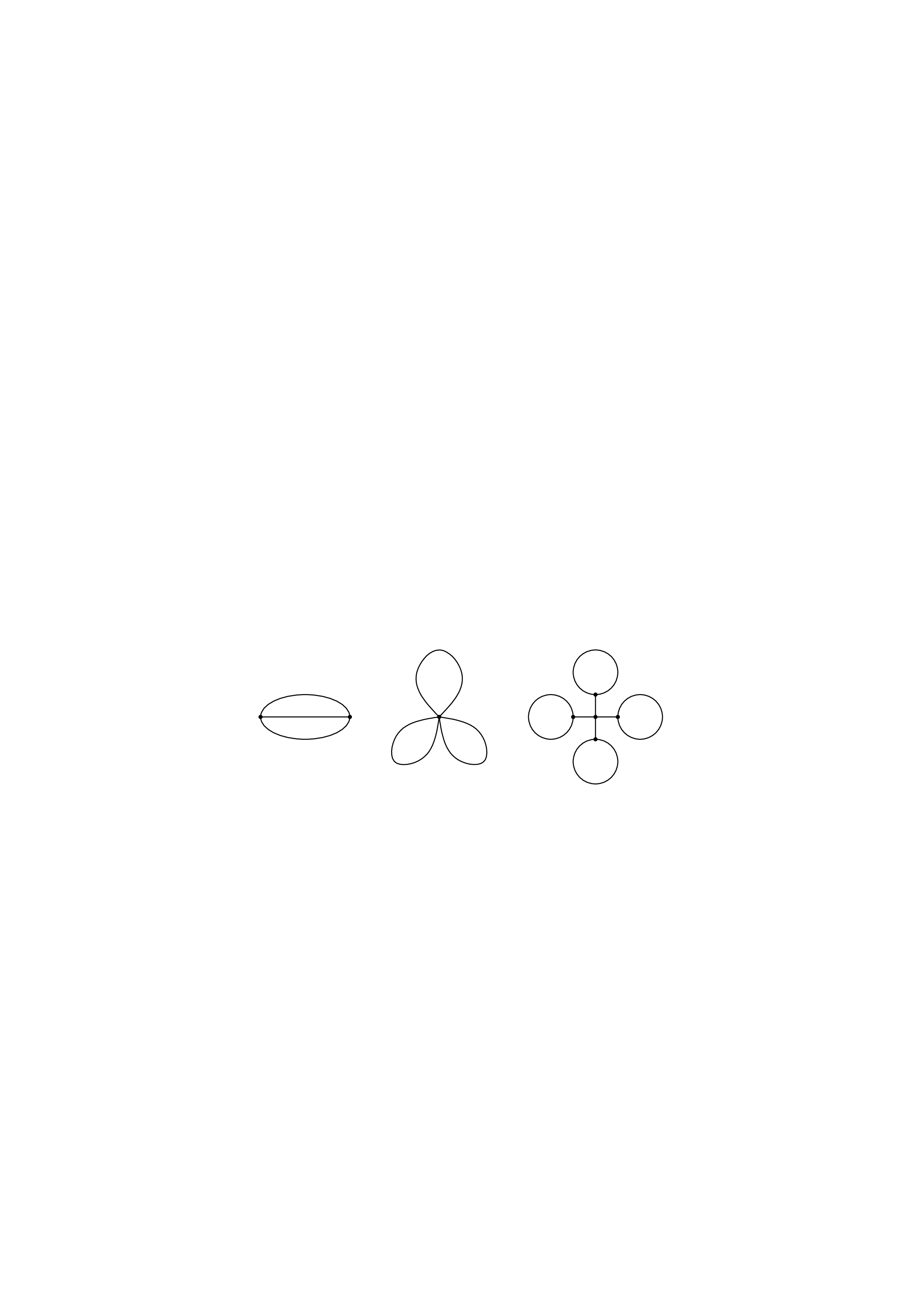}

\vspace{-1mm}
\caption{The left graph is $G_{{\rm banana}, 2}$, the center graph is  $G_{{\rm bouquet}, 3}$ and the right graph is $G_{{\rm lollipop}, 4}$. Black dots (resp. lines) stand for vertices (resp. edges).}
	\label{figure1}
\end{center}
\end{figure}

The rest of this paper is organized as follows.
In Section \ref{section2}, we prepare definitions in graph theory and three lemmas which we need later.
Section \ref{section3} gives the proofs of the two assertions above.

\section*{Acknowledgements}
The first author is partially supported by JSPS KAKENHI Grant Number 18K13384.
The second author is supported by JSPS KAKENHI Grant Number 20J11910.

\section{Preliminaries}
	\label{section2}

In this paper, a \textit{graph} $G = (V_G, E_G, \varepsilon _G)$ means an undirected finite graph 	allowing multiple edges and loops.
Here, $V_G$ is the finite set of vertices, and $E_G$ is the finite set of edges, and $\varepsilon _G : E_G \to 2^{V_G}$ 
is the map that associates an edge $e \in E_G$ with its vertices, where $2^{V_G}$ is the power set of $V_G$. 
We note that $\# ( \varepsilon _G (e) ) \in \{ 1 , 2 \}$ holds for any $e \in E_G$. 
An edge $e$ is called a \textit{loop} when $\# ( \varepsilon _G (e)) = 1$. 
A \textit{cut vertex} (resp. \textit{bridge}) of a graph $G$ is a vertex (resp. an edge) 
whose deletion increases the number of connected components of $G$. 
The \textit{degree} of a vertex is the number of edges incident to it, where a loop is counted twice.
A vertex of degree one is called a \textit{leaf}.

An \textit{automorphism} $f = (f_V, f_E)$ of a graph $G$ is 
bijective maps $f_V: V_G \to V_G$ and $f_E: E_G \to E_G$ satisfying $f_V \circ \varepsilon _G = \varepsilon _G \circ f_E$. 
$\operatorname{Aut}(G)$ denotes the set of all automorphisms of $G$. 
For a subset $S \subset V_G \sqcup E_G$, 
$\operatorname{Aut}(G)_S$ denotes the set of all $f \in \operatorname{Aut}(G)$ satisfying 
$f_V (v) = v$ and $f_E(e) = e$ for any $v \in S \cap V_G$ and $e \in S \cap E_G$. 
When $S$ consists of only one element $x$, 
we sometimes write $\operatorname{Aut}(G)_x$ instead of $\operatorname{Aut}(G)_{\{x\}}$. 
We note that according to this definition, $\# \operatorname{Aut}(G_{{\rm bouquet}, 1})=1$.

\begin{lemma}\label{lem:simplify}
Let $G$ be a connected leafless graph.
Let $x$ be a vertex and let $e_1, \ldots , e_d$ be all the edges incident to $x$. 
Let $G^{\prime}$ be the subgraph of $G$ obtained by removing $x, e_1, \ldots, e_d$ from $G$. 
Let $G^{\prime \prime}$ be the maximum subgraph of $G^{\prime}$ in which all vertices have degree at least two. 
We set 
\[
S = \left( E_{G} \setminus E_{G^{\prime \prime}} \right) \sqcup 
\bigcup _{e \in E_{G} \setminus E_{G^{\prime \prime}}} \varepsilon _G (e).
\]
Then the following hold. 
\begin{enumerate}
\item[(1)] $\operatorname{Aut}(G)_{\{ x,e_1, \ldots , e_d \}} = \operatorname{Aut}(G)_S$. 
\item[(2)] Suppose that $G^{\prime \prime} \not = \varnothing$. Then there exists an injective map 
$\operatorname{Aut}(G)_{\{ x,e_1, \ldots , e_d \}} \to \operatorname{Aut}(G^{\prime \prime})_y$ 
for some vertex $y \in V_{G^{\prime \prime}}$. 
\end{enumerate}
\end{lemma}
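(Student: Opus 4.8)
The plan is to describe $G''$ as the $2$-core of $G' = G-x$ and to analyse the complementary part. Write $A := E_G \setminus E_{G''}$ for the deleted edges and $R := V_G \setminus V_{G''}$ for the deleted vertices. The set $A$ splits into the edges $e_1,\dots,e_d$ incident to $x$ together with $F' := A \setminus \{e_1,\dots,e_d\}$, which are exactly the edges of $G'$ discarded while passing to the $2$-core. Two structural facts will drive everything. First, $F'$ is a forest: a cycle contained in $F'$ would consist entirely of vertices of degree $\ge 2$ (from the two cycle edges) that survive the $2$-core reduction, forcing it into $G''$, a contradiction. Second, no vertex of $R$ is incident to an edge of $G''$ (both endpoints of such an edge lie in $V_{G''}$), so every edge at an $R$-vertex is either some $e_i$ or an edge of $F'$; consequently, since $G$ is leafless, any leaf $\ell$ of $F'$ lying in $R$ has $\deg_G(\ell)\ge 2$ while having only one $F'$-edge, hence carries at least one edge to $x$. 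This \emph{anchoring of $F'$-leaves to $x$} is the crux.

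For part (1), the inclusion $\{x,e_1,\dots,e_d\}\subseteq S$ is immediate (each $e_i\in A$, and $x\in\varepsilon_G(e_i)$), giving $\operatorname{Aut}(G)_S \subseteq \operatorname{Aut}(G)_{\{x,e_1,\dots,e_d\}}$. For the reverse inclusion I would take $f\in\operatorname{Aut}(G)_{\{x,e_1,\dots,e_d\}}$ and first note that $f$ fixes every neighbour of $x$: if $\varepsilon_G(e_i)=\{x,u\}$ then $f$ preserves this set while fixing $x$, so it fixes $u$. Then I peel the forest $F'$ inward from its leaves. At the base, a leaf of $F'$ in $R$ is a neighbour of $x$, hence fixed. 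Inductively, once a vertex $v\in R$ becomes a current leaf, all of its incident edges except the single one toward its parent are already fixed (its edges to $x$ are among the $e_i$, and its already-peeled child-edges are fixed); since $f$ fixes $v$ and permutes its edges, it must fix that last edge and therefore its other endpoint. Iterating fixes every edge of $F'$, every vertex of $R$, and every attachment vertex of $F'$ lying in $V_{G''}$ — that is, all of $S$ — so $f\in\operatorname{Aut}(G)_S$, proving equality.

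For part (2), assume $G''\neq\varnothing$. By (1) every $f\in\operatorname{Aut}(G)_{\{x,e_1,\dots,e_d\}}$ fixes $S$ pointwise, so it preserves $E_{G''}=E_G\setminus A$ and $V_{G''}$ and restricts to $f|_{G''}\in\operatorname{Aut}(G'')$. Since $G$ is connected and $V_{G''}\subsetneq V_G$ (as $x\notin V_{G''}$), some edge of $A$ crosses from $R$ into $V_{G''}$, giving an attachment vertex $y\in V_{G''}\cap S$; by (1) this $y$ is fixed, so $f|_{G''}\in\operatorname{Aut}(G'')_y$. The assignment $f\mapsto f|_{G''}$ is injective because $f$ is the identity on the complementary data $A\cup R\subseteq S$, so $f$ is completely determined by its restriction to $G''$. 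The main obstacle is precisely the substantive half of (1): upgrading the easy observation that $f$ preserves the decomposition \emph{setwise} to the statement that $f$ fixes $S$ \emph{pointwise}. This is where the two structural facts are essential — the forest structure of $F'$ makes the leaf-peeling induction well founded, and leaflessness anchors each $F'$-leaf to $x$, so that fixing $x$ together with \emph{all} its incident edges rigidifies the entire deleted part and propagates fixedness up to the attachment vertices of $G''$.
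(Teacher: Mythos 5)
Your overall strategy is the same as the paper's: propagate fixedness outward from $x$ through the part of $G$ lying outside the maximal min-degree-two subgraph $G^{\prime\prime}$, using leaflessness of $G$ to anchor the propagation, and then obtain (2) by restriction. Your treatment of (2) (well-definedness of $f\mapsto f|_{G^{\prime\prime}}$, existence of a fixed attachment vertex $y$ via connectedness, injectivity because everything outside $G^{\prime\prime}$ lies in $S$) is correct and essentially identical to the paper's. The difference is in how the propagation in (1) is organized: the paper inducts along the reduction sequence $G^{\prime}=G_0\supset G_1\supset\cdots\supset G_c=G^{\prime\prime}$ that constructs $G^{\prime\prime}$ by repeatedly deleting degree-$\le 1$ vertices, so the induction exhausts $E_G\setminus E_{G^{\prime\prime}}$ by construction; you instead peel the complementary forest $F^{\prime}$ inward from its leaves in $R=V_G\setminus V_{G^{\prime\prime}}$.

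This reorganization creates a genuine gap at the step ``iterating fixes every edge of $F^{\prime}$.'' Your iteration only peels current leaves lying in $R$, so it stalls exactly when the un-peeled part of $F^{\prime}$ is nonempty but all of its leaves lie in $V_{G^{\prime\prime}}$; you need to know this never happens, i.e.\ that no component of $F^{\prime}$ meets $V_{G^{\prime\prime}}$ in two or more vertices, and this does not follow from your two structural facts. To see that it is not formal, suppose $G^{\prime}$ is two triangles joined by a path $t_1$--$m$--$t_2$ of length two and $x$ is joined to $m$ by two parallel edges, and replace $G^{\prime\prime}$ by the non-maximal min-degree-two subgraph consisting of the two triangles: then the corresponding ``$F^{\prime}$'' is the path (a forest), the anchoring fact holds vacuously (the path has no leaves in $R$), your peeling cannot start, and the conclusion of (1) for the corresponding ``$S$'' is false, since swapping the two triangles fixes $x$, both edges at $x$, and $m$. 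So the argument as written must invoke the maximality of $G^{\prime\prime}$ a second time, not just for acyclicity of $F^{\prime}$. The repair is short and in the spirit of your acyclicity proof: if the peeling stalls at a nonempty forest $T$ all of whose leaves lie in $V_{G^{\prime\prime}}$, then every vertex of $G^{\prime\prime}\cup T$ has degree at least two in $G^{\prime\prime}\cup T$, contradicting the maximality of $G^{\prime\prime}$. With that one sentence added, your proof is complete and is a faithful variant of the paper's.
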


\begin{proof}
If $G$ is trivial, then the assertions are clear.
Assume that $G$ is not trivial.
$G^{\prime \prime}$ is obtained from $G^{\prime}$ by repeating the following two operations: 
\begin{itemize}
\item[(a)] Removing a vertex $v$ of degree zero. 
\item[(b)] Removing a leaf $v$ and the edge $e$ incident to $v$. 
\end{itemize}
Let
\[
G^{\prime} = G_0 \supset G_1 \supset \cdots \supset G_c = G^{\prime \prime}
\]
be its process. 
Then, corresponding to the two operations above, the following hold: 
\begin{itemize}
\item[(a)] $E_{G_{i+1}} = E_{G_i}$ and $V_{G_{i}} \setminus V_{G_{i+1}} = \{ v \}$. 
\item[(b)] $E_{G_{i}} \setminus E_{G_{i+1}} = \{ e \}$ and $V_{G_{i}} \setminus V_{G_{i+1}} = \{ v \}$. 
\end{itemize} \noindent
We set 
\[
S_i = \left( E_{G} \setminus E_{G_i} \right) \sqcup 
\bigcup _{e \in E_{G} \setminus E_{G_i}} \varepsilon _G (e). 
\]
Since $G$ is connected and nontrivial, $d \ge 1$ holds.
Hence $S_0 \supset \{ x, e_1, \ldots, e_d \}$.
By the definition of $S_0$, we have $\operatorname{Aut}(G)_{\{ x, e_1, \ldots, e_d \}} = \operatorname{Aut}(G)_{S_0}$.
Thus, to prove (1), it is sufficient to show 
$\operatorname{Aut}(G)_{S_i} = \operatorname{Aut}(G)_{S_{i+1}}$ for each $0 \le i \le c-1$. 

The assertion is clear for case (a). Suppose (b). 
Let $u$ be the end point of $e$ that is not $v$. 
Since all vertices of $G$ have degree at least two, it follows that $v \in S_i$.
Furthermore, any edge of $G$ incident to $v$ other than $e$ is contained in $S_i$. 
Therefore any $f \in \operatorname{Aut}(G)_{S_i}$ fixes $e$ and hence $u$ too. 
Thus we have $\operatorname{Aut}(G)_{S_i} = \operatorname{Aut}(G)_{S_{i+1}}$, which completes the proof of (1). 

By construction, it follows that $S \cap V_{G^{\prime \prime}} \not = \varnothing$ when $G^{\prime \prime} \not= \varnothing$. 
Pick any $y \in S \cap V_{G^{\prime \prime}}$. 
Then the restriction map $\operatorname{Aut}(G)_S \to \operatorname{Aut}(G^{\prime \prime})_y$ is induced and it is injective. 
\end{proof}

Let $G$ be a graph and $S \subset E_G$. 
Then the \textit{contraction} $G/S$ is a graph defined as follows:
\[
V_{G/S} = V_G / \sim, \qquad E_{G/S} = E_G \setminus S, \qquad \varepsilon_{G/S} : E_{G/S} \to 2^{V_{G/S}},
\]
where $\sim$ is the minimum relation satisfying $x \sim y$ for any $x , y \in V_G$ with 
$\{x, y\} \in \varepsilon _G (S)$, and 
$\varepsilon_{G/S}$ is the composition $E_{G/S} \to E_G \xrightarrow{\varepsilon _G} 2^{V_G} \to 2^{V_{G/S}}$.

\begin{lemma}
	\label{lem:bridge}
Let $G$ be a graph and $S \subset E_G$. Let $G/S$ be its contraction. Then the following hold. 

\begin{enumerate}
\item[{\upshape (1)}]\ If $S$ is $\operatorname{Aut} (G)$-invariant, 
there exists a group homomorphim $\operatorname{Aut} (G) \to \operatorname{Aut} (G/S)$ which commutes with 
the projection $V_G \to V_{G/S}$ and the inclusion $E_{G/S} \to E_G$.

\item[{\upshape (2)}]\ Let $S$ be the set of all bridges of $G$. Then, the Betti number of $G/S$ is equal to that of $G$. 

\item[{\upshape (3)}]\ In addtion to (2), we assume that $G$ is leafless.
Then the group homomorphism $\operatorname{Aut} (G) \to \operatorname{Aut} (G/S)$ is injective.
\end{enumerate}
\end{lemma}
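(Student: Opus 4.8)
The plan is to treat the three parts in order, with essentially all the real work in part (3).

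\textbf{Part (1).} Given $f=(f_V,f_E)\in\operatorname{Aut}(G)$, the hypothesis that $S$ is $\operatorname{Aut}(G)$-invariant means $f_E(S)=S$. I would set $\bar f_E:=f_E|_{E_G\setminus S}$, which is a bijection of $E_{G/S}=E_G\setminus S$ because $f_E$ preserves both $S$ and its complement. For the vertex part I would check that $f_V$ respects $\sim$: if $x\sim y$ because $\{x,y\}=\varepsilon_G(e)$ for some $e\in S$, then $\{f_V(x),f_V(y)\}=\varepsilon_G(f_E(e))$ with $f_E(e)\in S$, so $f_V(x)\sim f_V(y)$; hence $f_V$ descends to a bijection $\bar f_V$ of $V_{G/S}=V_G/\!\sim$. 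The identity $\bar f_V\circ\varepsilon_{G/S}=\varepsilon_{G/S}\circ\bar f_E$ then follows from the corresponding identity for $f$ and the description of $\varepsilon_{G/S}$ as the composite $E_{G/S}\to E_G\xrightarrow{\varepsilon_G}2^{V_G}\to 2^{V_{G/S}}$. By construction $\bar f$ commutes with the projection $V_G\to V_{G/S}$ and the inclusion $E_{G/S}\hookrightarrow E_G$, and $f\mapsto\bar f$ is a homomorphism because restriction and descent are functorial. This is routine bookkeeping.

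\textbf{Part (2).} The key point is that the set $S$ of all bridges is acyclic, i.e.\ the subgraph $H:=(V_G,S)$ is a forest: if some bridges formed a cycle, each of them would lie on that cycle, contradicting the definition of a bridge. Now $\#E_{G/S}=\#E_G-\#S$ is immediate, while $\#V_{G/S}$ equals the number of $\sim$-classes, which is the number of connected components of $H$; since $H$ is a forest this equals $\#V_G-\#S$. Finally, contraction does not change the number of connected components of the ambient graph (contracting an edge merges two vertices already in the same component). Substituting into $\#E-\#V+k$ makes the two copies of $\#S$ cancel, giving equal Betti numbers.

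\textbf{Part (3).} Let $f$ lie in the kernel, so $\bar f=\operatorname{id}$. Unwinding this, $f_E$ fixes every non-bridge edge, and $\bar f_V=\operatorname{id}$ forces $f_V(v)\sim v$ for all $v$; equivalently $f$ preserves each connected component $T$ of the bridge-forest $H$, and $f|_T$ is an automorphism of the tree $T$. Singleton components are fixed immediately, since their $\sim$-class is a single vertex. For a component $T$ with at least one edge, the plan is to prove $f|_T=\operatorname{id}$. First I would fix every leaf $w$ of $T$: since $G$ is leafless, besides its unique incident bridge the vertex $w$ carries at least one non-bridge edge $e$, which $f$ fixes; if $e$ is a loop then $f_V(w)=w$ at once, and otherwise $e=\{w,u\}$ gives $f_V(w)\in\{w,u\}$, where $u\notin T$ because a bridge-path from $w$ to $u$ together with $e$ would be a cycle through bridges, which is impossible. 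Combined with $f_V(w)\sim w$ this forces $f_V(w)=w$. Having fixed all leaves, I would invoke the elementary fact that a tree automorphism fixing every leaf is the identity (two distinct vertices cannot be equidistant from every leaf, as one sees by extending the path between them to a leaf). This yields $f|_T=\operatorname{id}$ on the vertices and bridges of $T$; ranging over all $T$ gives $f_V=\operatorname{id}$ and $f_E=\operatorname{id}$ on bridges, and since $f_E$ already fixes the non-bridges we conclude $f=\operatorname{id}$.

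The main obstacle is part (3), specifically ruling out a nontrivial permutation of the vertices inside a bridge-tree $T$. The subtle point is that fixing all non-bridge edges does \emph{not} by itself fix their endpoints: a ``banana'' of parallel edges admits an automorphism swapping its two endpoints while fixing every edge. What rescues the argument is combining the leafless hypothesis with the constraint $f_V(v)\sim v$ coming from $\bar f=\operatorname{id}$, which confines each vertex to its own bridge-component and forbids swapping it with a vertex reached only through non-bridge edges. Recasting $f|_T$ as a genuine automorphism of the tree $T$, and reducing to the ``fixes all leaves implies identity'' fact, is what makes the step clean.
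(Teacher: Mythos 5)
Your proof is correct, and its core coincides with the paper's: parts (1) and (2) are the same routine checks (the paper compresses (2) to one line, noting that the bridge set is a forest so contraction removes equal numbers of vertices and edges), and in part (3) your key step --- fixing a leaf $w$ of the bridge forest by combining the fixed non-bridge edge at $w$ (which exists since $G$ is leafless), acyclicity of the bridge set (so the other endpoint $u$ satisfies $u\not\sim w$), and the constraint $f_V(w)\sim w$ --- is exactly the paper's argument. Where you genuinely differ is the propagation from the leaves to the whole forest. The paper peels leaf--edge pairs off the forest one at a time and repeats the argument by induction; you instead fix \emph{all} leaves of the original forest first and then invoke the standard fact that an automorphism of a finite tree fixing every leaf is the identity. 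Your route is in fact a bit more robust: in the paper's induction, a leaf of the peeled forest may have all of its remaining incident edges equal to previously removed \emph{bridges}, and for such an edge the ``$w\not\sim u$'' argument breaks down (the two endpoints of a bridge are $\sim$-equivalent), so one must patch the step using injectivity of $f_V$ together with the fact that the removed endpoint is already fixed --- a detail the paper leaves implicit. Your formulation never meets this case, because a leaf of the full bridge forest always carries a non-bridge edge; reducing to the tree-automorphism fact is what buys you this cleanliness, at the small cost of proving (or citing) that fact separately.
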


\begin{proof}
We shall prove (1). 
Let $f = (f_V, f_E) \in \operatorname{Aut}(G)$. 
Since $S$ is $\operatorname{Aut} (G)$-invariant, $f_V$ and $f_E$ induce the bijective maps 
$f^{\prime} _V$ on $V_{G/S} = V_G / \sim$ and $f^{\prime} _E$ on $E_{G/S} = E_G \setminus S$, respectively. 
Since they satisfy $f^{\prime} _E \circ \varepsilon _{G/S} = \varepsilon _{G/S} \circ f^{\prime} _V$, 
we have $f^{\prime} = (f^{\prime} _V, f^{\prime} _E) \in \operatorname{Aut}(G/S)$. 
Then we obtain a group homomorphim $\operatorname{Aut} (G) \to \operatorname{Aut} (G/S);\ f \mapsto f^{\prime}$, which proves (1).

Since $S$ forms the union of trees, we have $\# V_G - \# V_{G/S} = \# E_G - \# E_{G/S}$, which proves (2).

We shall prove (3). 
Let $f \in \operatorname{Aut}(G)$ be an element whose image in $\operatorname{Aut}(G/S)$ is the identity map. 
Let $H$ be the subgraph of $G$ such that $E_H = S$ and $V_H = \bigcup _{e \in S} \varepsilon _G (e)$. 
By assumption, $f \in \operatorname{Aut}(G)_F$ holds for $F = (V_G \setminus V_H) \sqcup (E_G \setminus E_H)$. 
Let $v$ be a leaf of $H$ and let $e$ be the edge of $H$ incident to $v$. 
First, we shall see that $f$ fixes $v$. 
Since $v$ has degree at least two in $G$, there exists an edge $e_1 \in E_G \setminus E_H$ 
such that $e_1 \not = e$ and $v \in \varepsilon _G (e_1)$. 
If $e_1$ is a loop, then $f$ fixes $v$ because $f$ fixes $e_1$. 
Suppose that $e_1$ is not a loop. 
Let $v^{\prime}$ be the end point of $e_1$ that is not $v$. 
Since $f$ fixes $e_1$, it follows that $f_V(v) = v$ or $f_V(v) = v^{\prime}$. 
Since $S$ does not contain a circuit, it follows that $v \not \sim v^{\prime}$. 
On the other hand, since the image of $f$ in $\operatorname{Aut}(G/S)$ is the identity map, we have $f_V(v) \sim v$. 
Therefore $f$ fixes $v$. 
Since $f$ fixes all edges incident to $v$ except $e$, it also fixes $e$. 
Let $v^{\prime \prime}$ be the end point of $e$ that is not $v$. 
Then $f$ also fixes $v^{\prime \prime}$. 
Let $H'$ be the subgraph of $H$ obtained by removing $e$ and $v$. 
Then we have showed that 
$f \in \operatorname{Aut}(G)_{F^{\prime}}$ for $F^{\prime} = (V_G \setminus V_{H^{\prime}}) \sqcup (E_G \setminus E_{H^{\prime}})$.
Since $H$ is a finite union of trees, we can see $f \in \operatorname{Aut}(G)_{V_G \sqcup E_G} = \{ {\rm id}_G \}$ by induction, 
which completes the proof. 
\end{proof}

\begin{lemma}
	\label{lem:bikkuri}
\begin{enumerate}
\item[(1)] Let $l$, $m$ and $n$ be positive integers with $m,n \ge l$. 
Then we have 
\[
m! n! \le l! (m+n-l)!. 
\]
\item[(2)] Let $n_1, \ldots , n_c$ be positive integers. Then we have 
\[
(n_1 !) (n_2 !) \dotsm (n_c !) \le \left( - c + 1 + \sum_{ i = 1}^{c} n_i \right) !.
\]
The inequality becomes an equality if and only if all but one of $n_1, \ldots, n_c$ are equal to one.
\end{enumerate}
\end{lemma}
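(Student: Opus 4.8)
The plan is to prove (1) by rewriting it as a comparison of two products of the same number of consecutive integers, and then to deduce (2) from (1) by induction on $c$, using only the special case $l = 1$ at each step.

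For (1), since $m, n \ge l \ge 1$, I would divide both sides by $l!\, n!$ and observe that the claim is equivalent to
\[
\frac{m!}{l!} \le \frac{(m+n-l)!}{n!}.
\]
Here the left-hand side equals $\prod_{k=1}^{m-l}(l+k)$ and the right-hand side equals $\prod_{k=1}^{m-l}(n+k)$: both are products of exactly $m-l$ consecutive integers. Since $n \ge l$, each factor satisfies $n+k \ge l+k > 0$, so the product inequality is immediate. The same product representation shows that equality in the case $l = 1$ holds precisely when $m = 1$ (empty product) or $n = 1$ (each factor $n+k$ equals $1+k$); this sharp criterion is what I will need for (2).

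For (2), I would argue by induction on $c$. The case $c = 1$ is the identity $n_1! = n_1!$, and the stated equality condition is vacuous. For the inductive step, set $N := -c+2+\sum_{i=1}^{c-1}n_i$; since each $n_i \ge 1$ we have $N \ge 1$. The induction hypothesis gives $(n_1!)\cdots(n_{c-1}!) \le N!$, whence
\[
(n_1!)\cdots(n_c!) \le N!\, n_c! \le (N+n_c-1)! = \Bigl(-c+1+\textstyle\sum_{i=1}^{c}n_i\Bigr)!,
\]
where the second inequality is (1) with $l = 1$, $m = N$, $n = n_c$. This proves the inequality.

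It remains to pin down the equality case, which I expect to be the only delicate point. Since every step above is a genuine inequality, overall equality forces equality in both the induction-hypothesis step and the part-(1) step. By the induction hypothesis the former means all but one of $n_1, \ldots, n_{c-1}$ equal $1$, and by the $l=1$ criterion from (1) the latter means $N = 1$ or $n_c = 1$. A short case analysis then shows these two conditions together are equivalent to \emph{all but one of $n_1, \ldots, n_c$ equal $1$}: if $n_c = 1$ the exceptional index lies among the first $c-1$, while if $n_c \ge 2$ then $N = 1$ forces $n_1 = \cdots = n_{c-1} = 1$, so $n_c$ is the unique exception. Conversely, placing the single exception either at index $c$ or among the first $c-1$ makes both steps equalities. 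The main obstacle is thus purely bookkeeping: keeping the equality conditions of the two merged inequalities consistent with the symmetric statement, and in particular extracting the sharp equality criterion for the $l=1$ case of (1), which the statement of (1) does not itself record.
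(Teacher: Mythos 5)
Your proof is correct and follows essentially the same route as the paper, which simply states that (1) is straightforward and that (2) reduces to the case $l=1$ of (1) by induction on $c$. You have merely filled in the details the paper omits, including the equality analysis, and your bookkeeping of the equality cases is sound.
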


\begin{proof}
(1) is straightforward. By induction on $c$, (2) is reduced to (1) for $l=1$. 
\end{proof}

\section{Main results}
	\label{section3}

In this section, we will prove the two assertions in Section 1.

The following proposition is valid also for $g=1$ in contrast to Proposition \ref{prop:aut}.

\begin{prop}
	\label{prop:fixedpt}
Let $G$ be a connected leafless graph of Betti number $g$ and $x$ a vertex of $G$.
Then the inequality
\[
\# \operatorname{Aut}(G)_x \le 2^g g!
\]
holds.
\end{prop}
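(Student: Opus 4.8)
The plan is to induct on the Betti number $g$, the base case $g = 0$ (when $G$ is a single vertex) being immediate. Write $e_1, \dots, e_d$ for the edges incident to $x$, of which $a$ are loops. Every automorphism fixing $x$ permutes these edges, so restriction gives a homomorphism $\rho \colon \operatorname{Aut}(G)_x \to \operatorname{Sym}(\{e_1, \dots, e_d\})$ whose kernel is exactly $\operatorname{Aut}(G)_{\{x, e_1, \dots, e_d\}}$. Hence $\#\operatorname{Aut}(G)_x = \#\operatorname{im}\rho \cdot \#\ker\rho$, and I would estimate the two factors separately before combining them.

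For the kernel I would invoke Lemma~\ref{lem:simplify}. Let $C_1, \dots, C_c$ be the connected components of the graph obtained by deleting $x$ and $e_1, \dots, e_d$; let $\beta_i$ be the number of edges joining $x$ to $C_i$, put $\gamma_i = b_1(C_i)$, and let $D_i$ be the $2$-core of $C_i$. By Lemma~\ref{lem:simplify}(1) the kernel equals $\operatorname{Aut}(G)_S$, and since $S$ meets every non-empty $D_i$, the restriction argument of Lemma~\ref{lem:simplify}(2), applied one component at a time, gives an injection $\ker\rho \hookrightarrow \prod_i \operatorname{Aut}(D_i)_{y_i}$ for suitable $y_i \in D_i$ (tree components lie entirely in $S$ and are fixed, so contribute nothing). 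A short Betti-number count gives $g = a + \sum_i (\gamma_i + \beta_i - 1)$, whence $\gamma_i \le g$; moreover $\gamma_i = g$ would force $a = 0$, a single component and $\beta_i = 1$, i.e.\ $x$ a leaf, which is excluded. So $\gamma_i < g$, and the induction hypothesis yields $\#\ker\rho \le \prod_i 2^{\gamma_i} \gamma_i!$.

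For the image, $\rho$ preserves the distinction between loops and non-loops and sends the bundle of edges into $C_i$ to the bundle into an isomorphic component; hence $\operatorname{im}\rho$ is contained in $\operatorname{Sym}(\{\text{loops}\}) \times \prod_t \bigl( S_{\beta_t} \wr S_{m_t} \bigr)$, the product running over the isomorphism types of attached components, with $m_t$ the multiplicity and $\beta_t$ the common bundle size. This gives $\#\operatorname{im}\rho \le a! \, \prod_t m_t! \, (\beta_t!)^{m_t}$.

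The crux is the final combination, and the naive route fails: merging every symmetric-group factor directly through Lemma~\ref{lem:bikkuri}(2) overshoots $2^g g!$ (already two parallel bigons hung on $x$, where the true value $2^2 \cdot 2!$ is attained, get bounded only by $4!$). The fix is to charge each bundle permutation together with the internal symmetries of its core to a single power of two per independent cycle through $x$. Writing $\delta_t = \gamma_t + \beta_t - 1$ for the genus carried by one component of type $t$, I would first check the per-block estimate $2^{\gamma_t} \gamma_t! \cdot \beta_t! \le 2^{\delta_t} \delta_t!$, which is Lemma~\ref{lem:bikkuri}(1) with $l = 1$ when $\gamma_t \ge 1$ and the elementary $\beta_t \le 2^{\beta_t - 1}$ when $\gamma_t = 0$. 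Multiplying the kernel and image bounds and treating the $a$ loops as $a$ blocks of genus one then gives
\[
\#\operatorname{Aut}(G)_x \le 2^{\sum_s m_s \delta_s} \prod_s m_s! \, (\delta_s!)^{m_s},
\]
where $s$ ranges over all blocks grouped by type and $\sum_s m_s \delta_s = g$. Finally, applying Lemma~\ref{lem:bikkuri}(2) to the multiset consisting of each $\delta_s$ with multiplicity $m_s$ together with the numbers $m_s$ bounds $\prod_s m_s! \, (\delta_s!)^{m_s}$ by $g!$, which yields $\#\operatorname{Aut}(G)_x \le 2^g g!$ and closes the induction.
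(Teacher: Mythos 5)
Your argument is correct, but it follows a genuinely different route from the paper's. The paper first contracts all bridges (Lemma \ref{lem:bridge}) and then splits into two cases according to the number $k$ of connected components of $G\setminus\{x\}$: when $k=1$ it uses the edge-permutation homomorphism with the crude image bound $d!$ together with Lemma \ref{lem:simplify} and induction (this is Claim \ref{claim}), and when $k\ge 2$ it switches to the homomorphism onto permutations of the components, bounding the kernel by $\prod_i\#\operatorname{Aut}(G_i)_x$ and finishing with Lemma \ref{lem:bikkuri}(2). You instead run everything through the single edge-permutation homomorphism $\rho$, with no bridge contraction and no case split; what replaces the case analysis is your finer wreath-product bound $a!\,\prod_t m_t!\,(\beta_t!)^{m_t}$ on $\operatorname{im}\rho$ and the per-block inequality $2^{\gamma_t}\gamma_t!\,\beta_t!\le 2^{\delta_t}\delta_t!$, which charges each bundle of parallel edges to the genus $\delta_t=\gamma_t+\beta_t-1$ it carries before the final application of Lemma \ref{lem:bikkuri}(2); your bigon example identifies correctly why a naive merge of all factorials would overshoot, so this recombination is indeed the crux. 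What each approach buys: the paper's organization keeps every individual estimate elementary and produces along the way the refined bound $2^{g-d+1}d!\,(g-d+1)!$ of Claim \ref{claim}, which is reused later in the proof of Proposition \ref{prop:aut}; your organization is more uniform, with one induction that treats bridges, tree pieces, and loops at $x$ on the same footing. Two small points should be tightened, both consequences of leaflessness that you use but do not spell out: in showing $\gamma_i<g$, the assertion that $\gamma_i=g$ forces a single component is itself a leaflessness argument (any further component would have $\gamma_j=0$ and $\beta_j=1$, i.e.\ be a tree attached to $x$ by a single edge, which already produces a leaf of $G$ away from $x$); and the final application of Lemma \ref{lem:bikkuri}(2) requires every entry of your multiset to be a positive integer, i.e.\ $\delta_s\ge 1$ for every block, which holds for exactly the same reason.
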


\begin{proof}
If $g = 0$, then $G$ is trivial, and the assertion is clear. 
Therefore we may assume that $g \ge 1$. 
Furthermore, by Lemma \ref{lem:bridge}, we may assume that $G$ has no bridges.

Let 
\[
G \setminus \{ x \} = U_1 \sqcup \cdots \sqcup U_k
\]
be the decomposition to the connected components. 
Let $G_i = U_i \cup \{ x \}$ be the subgraph of $G$. 
Let $g_i$ be the Betti number of $G_i$.
Then $g = \sum_{i=1}^{k}g_i$ and $g_i \ge 1$ hold. 
Furthermore, since $G$ has no bridges, each $G_i$ is leafless.

First, we treat the case when $k = 1$. 
\begin{claim}
	\label{claim}
When $k = 1$, the inequality
\[
\# \operatorname{Aut}(G)_x \le 2^{g - d + 1} d! (g - d + 1)!
\]
holds, where $d$ is the number of edges incident to $x$. 
In particular, Proposition \ref{prop:fixedpt} holds when $k=1$. 
\end{claim}

\begin{proof}
Let $e_1, \ldots, e_d$ be all the edges incident to $x$.
Let $S_d = S\{ e_1, \ldots, e_d \}$ denote the permutation group of $\{ e_1, \ldots, e_d \}$.
Then a group homomorphism $s$ is defined by
\[
s : \operatorname{Aut}(G)_x \to S_d; \qquad f \mapsto \left( \begin{smallmatrix} e_1 & e_2 & \cdots & e_d \\ f(e_1) & f(e_2) & \cdots & f(e_d) \end{smallmatrix} \right).
\]
Thus
\[
\# \operatorname{Aut}(G)_x \le \# S_d \cdot \# \operatorname{Ker}(s) = d! \cdot \# \operatorname{Aut} (G)_{\{x, e_1, \ldots, e_d \}}
\]
hold. 
We define the subgraph $G^{\prime}$ of $G$ by removing $x, e_1, \ldots, e_d$ from $G$.
Let $G^{\prime \prime}$ be the maximum subgraph of $G^{\prime}$ in which all vertices have degree at least two. 
We note that $G^{\prime}$ and $G^{\prime \prime}$ are connected since $k=1$. 
Furthermore, $G^{\prime} \not = \varnothing$ holds unless $G = G_{{\rm bouquet}, 1}$. 
If $G^{\prime} \not = \varnothing$, then the Betti number of $G^{\prime}$ is equal to $g-d+1$. 
In particular, we have $g-d+1 \ge 0$.

Suppose $d = g + 1$. 
Then $G^{\prime}$ is a tree, and it follows that $G^{\prime \prime} = \varnothing$. 
Then by Lemma \ref{lem:simplify}, 
\[
\# \operatorname{Aut} (G) _{\{ x, e_1, \ldots, e_d\}} = \# \operatorname{Aut} (G) _{V_G \sqcup E_G} = 1, 
\]
and hence
\[
\# \operatorname{Aut} (G)_x \le d! \cdot \# \operatorname{Aut} (G)_{\{x, e_1, \ldots, e_d \}} = (g + 1)! \le 2^g g!. \tag{\ref{claim}.1}
\]

Suppose that $g - d + 1 \ge 1$. 
If $G = G_{{\rm bouquet}, 1}$, then we have $\# \operatorname{Aut}(G)_x = 1 < 2 = 2^{g - d + 1} d! (g - d + 1)!$. 
In what follows, we suppose $G^{\prime \prime} \not = \varnothing$. 
Then we have $d \ge 2$ and the Betti number of $G^{\prime \prime}$ is equal to $g' := g-d+1$. 
Then by Lemma \ref{lem:simplify}(2), there exists a vertex $y$ of $G^{\prime \prime}$ such that
\[
\# \operatorname{Aut}(G)_{\{x, e_1, \ldots, e_d \}} \to \# \operatorname{Aut}(G^{\prime \prime})_y; \qquad f \mapsto f|_{G^{\prime \prime}}
\]
is injective.
Thus, we have 
\[
\# \operatorname{Aut}(G)_{\{x, e_1, \ldots, e_d \}} \le \# \operatorname{Aut}(G^{\prime \prime})_y.
\]
By induction on the Betti number, we may assume that 
\[
\# \operatorname{Aut}(G^{\prime \prime})_y \le 2^{g^{\prime}} g^{\prime}! = 2^{g - d + 1} (g - d + 1)!.
\]
Therefore, we have
\[
\# \operatorname{Aut}(G)_x \le 2^{g - d + 1} d! (g - d + 1)! < 2^g g!.
\]
Here, the second inequality follows from Lemma \ref{lem:bikkuri} and $d \ge 2$. 
\end{proof}

Suppose $k \ge 2$. 
Then by $f \in \operatorname{Aut}(G)_x$, each $G_i$ is mapped to some $G_j$.
It induces the group homomorphism
\[
t: \operatorname{Aut}(G)_x \to S_k = S\{ G_1, \ldots, G_k \}; \qquad f \mapsto \left( \begin{smallmatrix} G_1 & G_2 & \cdots & G_k \\ f(G_1) & f(G_2) & \cdots & f(G_k) \end{smallmatrix} \right).
\]
Since the kernel $\operatorname{Ker} (t)$ consists of all $f \in \operatorname{Aut}(G)_x$ such that $f(G_i) = G_i$ for each $i$,
\[
\# \operatorname{Aut}(G)_x \le \# S_k \cdot \# \operatorname{Ker} (t) \le k! \cdot \# \operatorname{Aut}(G_1)_x \dotsm \# \operatorname{Aut}(G_k)_x
\]
hold.
Then, by induction on the Betti number, we have 
\begin{align*}
\# \operatorname{Aut}(G)_x \tag{\ref{prop:fixedpt}.1}
&\le k! (2^{g_1} g_1 !) \dotsm (2^{g_k} g_k !) \\ 
&= 2^g (k!) (g_1 !) \dotsm (g_k !)\\ \tag{\ref{prop:fixedpt}.2}
&\le 2^g \left( - k - 1 + 1 + k + \sum _{i=1} ^k g_i \right) ! \\
& = 2^g g!.
\end{align*}
The last inequality follows from Lemma \ref{lem:bikkuri}.
We complete the proof of Proposition \ref{prop:fixedpt}. 
\end{proof}

\begin{rem}
	\label{rem:equality1}
\upshape{
In Proposition \ref{prop:fixedpt}, the equality $\# \operatorname{Aut}(G)_x = 2^g g!$ holds if and only if the pair of $G$ and $x$ is one of the following: 
\begin{enumerate}
\item $G$ is trivial and $x$ is the unique vertex. 
\item $G$ is a subdivision of $G_{{\rm banana}, 1}$ and $x$ is any vertex. 
\item $G$ is one of the graphs in (B) of Proposition \ref{prop:aut} with $g \ge 2$ and $x$ is the unique cut vertex. 
\item $G$ is one of the graphs in (C) of Proposition \ref{prop:aut} with $g \ge 2$ and $x$ is the unique cut vertex of $S_g$. 
\end{enumerate}

First, it is clear that $G$ and $x$ in the above list satisfy the equality $\# \operatorname{Aut}(G)_x = 2^g g!$. 
In what follows, we shall see the inverse implication. 

Suppose that $G$ and $x$ satisfy the equality. 
If $g = 0$, then $G$ must be (1). 
Suppose that $g \ge 1$ and $G$ is bridgeless. 
When $k = 1$, we have $g = 1$ by the second inequality in (\ref{claim}.1) in the proof of Claim \ref{claim}, and the pair $(G, x)$ is confined to (2). 
When $k \ge 2$, we note that the inequality (\ref{prop:fixedpt}.2) becomes an equality only if $g_1 = \cdots = g_k = 1$. 
In this case, $G$ must be a subdivision of $G_{{\rm bouquet}, g}$. 
For such $G$, it is clear that only the pairs $(G, x)$ as in (3) satisfy the equality. 
Finally, suppose that $g \ge 1$ and $G$ has a bridge. 
Let $S$ be the set of all bridges, and let $G/S$ be the contraction. 
Then, from what we have already seen, $G/S$ must be one of the graphs in (3). 
Therefore, the pair $(G, x)$ turns out to be one of the pairs in (4).
}
\end{rem}

Now we can prove Proposition \ref{prop:aut}.

\begin{proof}[Proof of Proposition \ref{prop:aut}]
First, it is clear that $G$ in the list (A), (B), and (C) satisfies 
the equality \[
\# \operatorname{Aut}(G) = 
\begin{cases}
2^g g! & \text{if $g \ge 3$,} \\
12 & \text{if $g = 2$}. \tag{$\star$} 
\end{cases}
\]

Suppose that $G$ has a bridge. 
Let $S$ be the set of all bridges, and let $G/S$ be the contraction. 
Then by Lemma \ref{lem:bridge}, we have $\# \operatorname{Aut}(G) \le \# \operatorname{Aut}(G/S)$. 
Furthermore, if $G$ satisfies the equality ($\star$) and if $G/S$ is one of the graphs in (A) and (B), 
then it is easy to see that $G$ is one of the graphs in (C). 

Suppose that $G$ has a loop. 
Let $G'$ be the subdivision of $G$ obtained by adding one vertex to each loop of $G$. 
Then $G'$ is a loopless graph and we have $\# \operatorname{Aut}(G) < \# \operatorname{Aut}(G')$. 

By the discussion above, it is sufficient to show
the inequality in Proposition \ref{prop:aut} for $G$ with the following additional condition: 
\begin{itemize}
\item $G$ is loopless and bridgeless. 
\end{itemize}
Furthermore, for such $G$, it is sufficient to show the equality ($\star$) holds only if 
$G$ is one of the graphs in (A) and (B). 
In what follows, we suppose that $G$ is loopless and bridgeless. 

Let $d$ be the maximum degree of the vertices of $G$ and let $l$ be the number of vertices of $G$ of degree $d$. 
Since $g \ge 2$, we have $d \ge 3$. 
Furthermore, by the hand-shaking lemma, we have
\[
2 \# E_G \ge d l + 2 (\# V_G - l), 
\]
and hence 
\begin{align*}
g &= \# E_G - \# V_G + 1\\
& \ge \frac{d l}{2} + \# V_G - l - \# V_G + 1\\
&= \left( \frac{d}{2} - 1 \right) l + 1. 
\end{align*}
As $d \ge 3$, we have $l \le 2g - 2$. 

Let $x_1, \ldots, x_l$ be all the vertices of $G$ of degree $d$. 
Set $x = x_1$. 
If $l = 1$, then $\operatorname{Aut} (G) = \operatorname{Aut}(G)_x$ holds and the assertion follows 
from Proposition \ref{prop:fixedpt} and Remark \ref{rem:equality1}.
In what follows, we assume $l \ge 2$, and in particular, $g \ge d-1$. 
The group homomorphism 
\[
s: \operatorname{Aut}(G) \to S_l = S \{ x_1, \ldots, x_l \}; \quad 
f \mapsto \left( \begin{smallmatrix} x_1 & x_2 & \cdots & x_l \\ f(x_1) & f(x_2) & \cdots & f(x_l) \end{smallmatrix} \right)
\]
induces an injective map on the left cosets 
\[
\bigl \{ f  \operatorname{Aut}(G)_x \ \big| \ f \in \operatorname{Aut}(G) \bigr \} \to 
\bigl \{ \sigma  S \{ x_2, \ldots, x_l \} \ \big| \ \sigma \in S \{ x_1, \ldots, x_l \} \bigr \}. 
\]
Therefore we have 
\[
\# \operatorname{Aut} (G) \le l \cdot \# \operatorname{Aut}(G)_x. 
\]

Let $G \setminus \{ x \} = U_1 \sqcup \cdots \sqcup U_k$ be the decomposition to the connected components of $G \setminus \{ x \}$. 
Let $G_i = U_i \cup \{ x \}$ be the subgraph of $G$, and let $g_i$ be the Betti number of $G_i$.
Then $g = \sum_{i = 1}^k g_i$ and $g_i \ge 1$ hold. 
Furthermore, each $G_i$ is loopless since $G$ is bridgeless. 

We deal with two cases where $k = 1$ and where $k \ge 2$ separately. 

\vspace{3mm}

\noindent
\underline{\textbf{Case 1}}\ Suppose $k=1$.

\vspace{2mm}

\noindent
\underline{\textbf{Case 1-1}}\ Suppose $d = g + 1$. 
Then we have
\[
g \ge  \left( \frac{d}{2} - 1 \right) l + 1 = g + \left( \frac{l}{2} - 1 \right) (g - 1). 
\]
Therefore, we have $l = 2$ since $g \ge 2$ and $l \ge 2$. 
By Claim \ref{claim}, we have
\[
\# \operatorname{Aut} (G) \le l \cdot \# \operatorname{Aut}(G)_x \le 2 d! = 2 \cdot (g + 1)!.
\]
Here, we note that $d$ equals the number of the edges incident to $x$ since $G$ is loopless.
 
When $g \ge 3$, we have $2 \cdot (g + 1)! \le 2^g g!$ and get the desired inequality 
$\# \operatorname{Aut} (G) \le 2^g g!$. 
We note that the equality $\# \operatorname{Aut} (G) = 2^g g!$ holds only if $(g, d, l) = (3,4,2)$. 
Under the assumption that $k = 1$, only subdivisions of $G_{{\rm banana}, 3}$ may satisfy $(g,d,l)=(3,4,2)$.
Hence $G$ must be one of the graphs in (A) with $g = 3$ if $\# \operatorname{Aut} (G) = 48 = 2^g g!$ hold.

When $g = 2$, we have $d = 3$. 
In this case, since $G$ is bridgeless, $G$ is a subdivision of $G_{\rm banana, 2}$ and we have $\# \operatorname{Aut} (G) \le 12 = 2^g g!$. 
The equality $\# \operatorname{Aut} (G) = 12$ holds if and only if $G$ is one of the graphs in (A) with $g = 2$. 

\vspace{2mm}

\noindent
\underline{\textbf{Case 1-2}}\ 
Suppose $d < g + 1$. 
By Claim \ref{claim}, we have
\begin{align*}
\# \operatorname{Aut} (G) 
&\le l \cdot \# \operatorname{Aut}(G)_x \\
&\le l \cdot 2^{g - d + 1} d! (g - d + 1)! \\
&\le (2g - 2) 2^{g - d + 1} d! (g - d + 1)!.
\end{align*}
Thus, it is sufficient to show that
\[
d! (g - d + 1)! (2g - 2) \le 2^{d - 1} g!.
\]

If $g - d + 1 = 1$, then the inequality is equivalent to $g - 1 \le 2^{g - 2}$ and it holds since $g = d \ge 3$. 
Therefore we get the desired inequality $\# \operatorname{Aut} (G) \le 2^g g!$. 
We note that the equality $\# \operatorname{Aut} (G) = 2^g g!$ holds only if $g$ and $l$ satisfy
\[
l = 2g - 2, \quad g-1 = 2^{g-2}. 
\]
Since $g = d \ge 3$, $(g,d,l) = (3,3,4)$ is the only case. 
In this case, $G$ is one of the following graphs. 
\begin{itemize}
\item A subdivision of $H_1$ in Figure \ref{figure:H_1,H_2}.
\item A subdivision of $H_2$ in Figure \ref{figure:H_1,H_2}.
\end{itemize}
We have $\# \operatorname{Aut} (G) \le \# \operatorname{Aut} (H_1) = 4! < 2^g g!$ in the formar case, and 
$\# \operatorname{Aut} (G) \le \# \operatorname{Aut} (H_2) = 16 < 2^g g!$ in the latter case. 
Therefore, the equality $\# \operatorname{Aut} (G) = 2^g g!$ never holds in this case. 
\begin{figure}[h]
\begin{center}
\includegraphics[width=0.6\columnwidth]{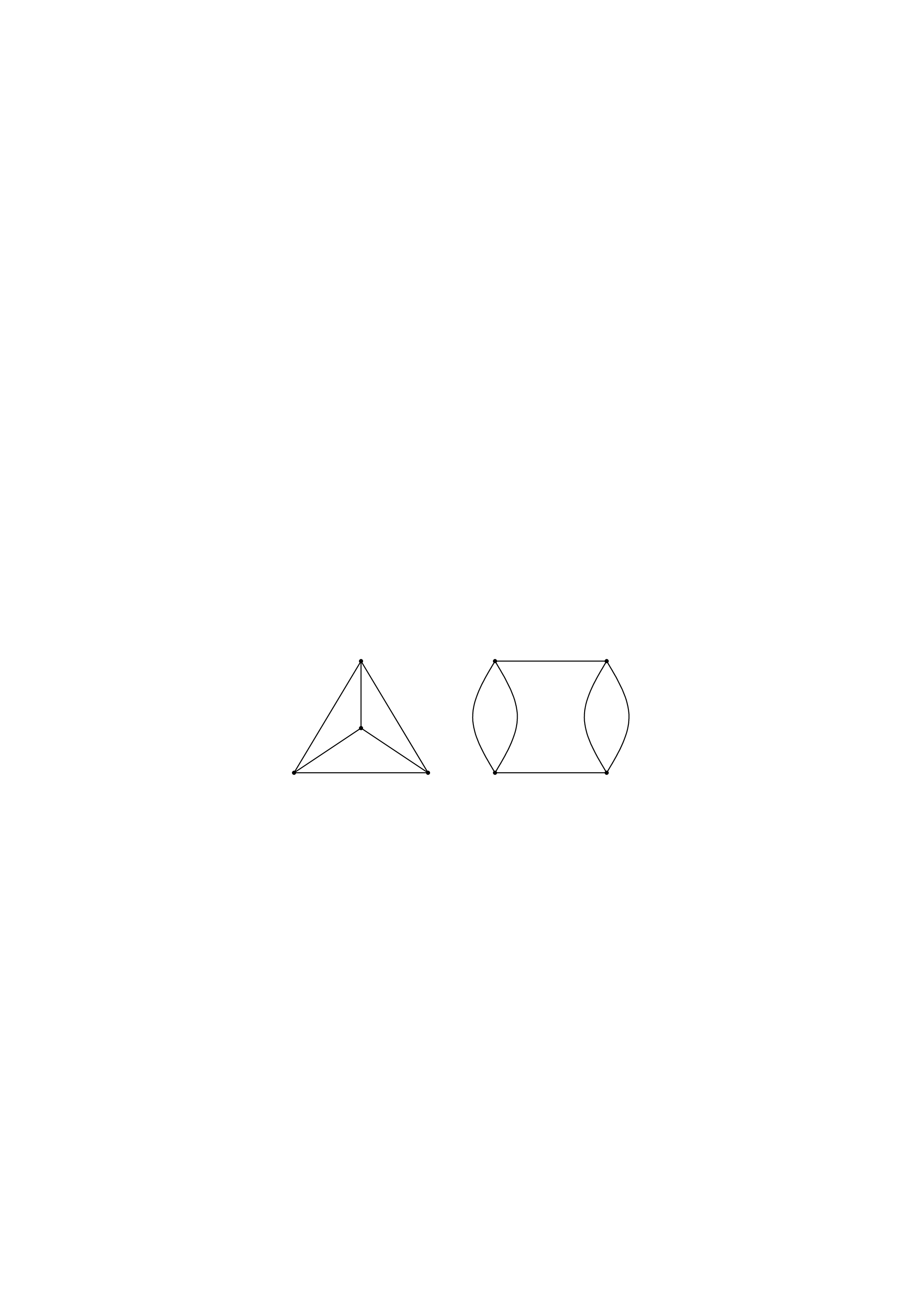}

\vspace{1mm}
\caption{We call the left graph $H_1$ and the right graph $H_2$. Black dots (resp. lines) stand for vertices (resp. edges).}
	\label{figure:H_1,H_2}
\end{center}
\end{figure}

If $g - d + 1 \ge 2$, we have
\begin{align*}
\frac{2^{d - 1}}{2g - 2} \cdot \frac{g!}{d! (g - d + 1)!} 
&\ge \frac{4}{2g - 2} \cdot \frac{g!}{d! (g - d + 1)!} \\
&= \frac{2}{(g + 1) (g - 1)} \left( \begin{matrix} g + 1 \\ d \end{matrix} \right) \\
& > 1.  
\end{align*}
Here the first inequality follows because $d \ge 3$. 
The second inequality follows from the fact that
\[
\left( \begin{matrix} g + 1 \\ d \end{matrix} \right) 
\ge \left( \begin{matrix} g + 1 \\ 2 \end{matrix} \right) = \frac{(g + 1) g}{2} > \frac{(g + 1)(g-1)}{2}, 
\]
which is obtained by $d \ge 2$ and $g - d + 1 \ge 2$.
Therefore we get the strict inequality $\# \operatorname{Aut} (G) < 2^g g!$ in this case.

\vspace{3mm}

\noindent
\underline{\textbf{Case 2}}\ Suppose $k \ge 2$. 

First, we see that $d \ge 4$, and especially $2 \le l \le g - 1$. 
Suppose the contrary that $d = 3$. 
Let $e_1$, $e_2$, and $e_3$ be the edges incident to $x$. 
As $k \ge 2$, there exists $G_i$ such that $E_{G_i}$ contains just one of $e_1$, $e_2$, and $e_3$. 
Such edge should be a bridge, a contradiction. 

Let $t$ denote the group homomorphism 
\[
t : \operatorname{Aut}(G)_x \to S_k = S \{ G_1, \ldots, G_k \}; \qquad f \mapsto \left( \begin{smallmatrix} G_1 & G_2 & \cdots & G_k \\ f(G_1) & f(G_2) & \cdots & f(G_k) \end{smallmatrix} \right). 
\]
Then, we have
\[
\# \operatorname{Aut}(G)_x = \# \operatorname{Im} (t) \cdot \# \operatorname{Ker} (t) \le \# \operatorname{Im} (t) \cdot \# \operatorname{Aut}(G_1)_x \dotsm \# \operatorname{Aut}(G_k)_x.
\]
Hence, by Proposition \ref{prop:fixedpt}, we have
\begin{align*}
\# \operatorname{Aut} (G) 
&\le l \cdot \# \operatorname{Aut}(G)_x \\
&\le l \cdot \# \operatorname{Im} (t) \cdot (2^{g_1} g_1 !) \dotsm ( 2^{g_k} g_k !) \\
&= l \cdot \# \operatorname{Im} (t) \cdot 2^g \cdot (g_1 !) \dotsm (g_k !). 
\end{align*}

\noindent
\underline{\textbf{Case 2-1}}\ Suppose that $g_1 = g_2 = \cdots = g_k$ does not hold. 

Since $f \in \operatorname{Aut} (G)$ maps $G_i$ to only $G_j$ such that $g_i = g_j$, 
we have $\# \operatorname{Im} (t) \le (k - 1)!$ in this case. 
Then we have 
\begin{align*}
l \cdot \# \operatorname{Im} (t) \cdot (g_1 !) \dotsm (g_k !) 
&\le (g -1) (k -1)! (g_1 !) (g_2 !) \dotsm (g_k !) \\
&\le (g -1) (g - 1)! \\
&< g!.  
\end{align*}
Here the second inequality follows from Lemma \ref{lem:bikkuri}. 
Therefore we get the strict inequality $\# \operatorname{Aut} (G) < 2^g g!$ in this case.

\vspace{2mm}

\noindent
\underline{\textbf{Case 2-2}}\ Suppose that $k \ge 3$ and $g_1 = g_2 = \cdots = g_k$.

Suppose that $x$ is fixed for any $f \in \operatorname{Aut}(G)$. 
Then $\operatorname{Aut}(G) = \operatorname{Aut}(G)_x$ holds and the assertion follows 
from Proposition \ref{prop:fixedpt} and Remark \ref{rem:equality1}. 
Therefore we may assume that $f(x) \not= x$ for some $f \in \operatorname{Aut}(G)$. 
We set $x^{\prime} = f(x)$. 
Let 
\[
G \setminus \{ x^{\prime} \} = U^{\prime}_1 \sqcup \cdots \sqcup U^{\prime}_{k^{\prime}}
\]
be the decomposition to the connected components of $G \setminus \{ x^{\prime} \}$. 
Let $G^{\prime}_i = U^{\prime}_i \cup \{ x^{\prime } \}$ be the subgraph of $G$, and let 
$g^{\prime}_i$ be its Betti number.
Since the automorphism $f$ maps $x$ to $x^{\prime}$, 
$k^{\prime}$ must be $k$ and $g'_i = \frac{g}{k}$ holds for each $i$. 
We may assume that $x^{\prime} \in V_{G_1}$. 
Suppose $x \in V_{G^{\prime}_i}$. 
Then $G_2, \ldots, G_k$ are subgraphs of $G^{\prime}_i$. 
Therefore it follows that
\[
\frac{k-1}{k}g = g_2 + \cdots + g_k \le g^{\prime}_i = \frac{g}{k},
\]
and contradicts the assumption $k \ge 3$. 

\vspace{2mm}

\noindent
\underline{\textbf{Case 2-3}}\ Suppose that $k=2$ and $g_1 = g_2$.

Let $g^{\prime} := g_1 = g_2$. 
Since $l \le g-1 = 2g^{\prime} -1$, we have
\[
l \cdot \# \operatorname{Im}(t) \cdot (g^{\prime} !)^2 \le (2 g^{\prime} - 1) \cdot 2 \cdot  (g^{\prime} !)^2 \le (2 g^{\prime}) ! = g!,
\]
which proves the desired inequality $\# \operatorname{Aut} (G) \le 2^g g!$. 
We note that the equality $\# \operatorname{Aut} (G) = 2^g g!$ holds only if $g'$ and $l$ satisfy
\[
l = 2g'-1, \quad (2 g^{\prime} - 1) \cdot 2 \cdot  (g^{\prime} !)^2 = (2 g^{\prime}) !. 
\]
Since $2g' = g \ge 3$, we have $(g', g, d, l) = (2, 4, 4, 3)$. 
We note that such $G$ has no vertices of degree three by the hand-shaking lemma. 
Therefore, in this case, $G$ turns out to be a subdivision of the graph $H$ in Figure \ref{figure:H}.
Hence we have $\# \operatorname{Aut}(G) \le \# \operatorname{Aut}(H) = 2^5 < 2^g g!$, 
and the equality $\# \operatorname{Aut} (G) = 2^g g!$ never holds in this case. 
\end{proof}

\begin{figure}[h]
\begin{center}
\includegraphics[width=0.6\columnwidth]{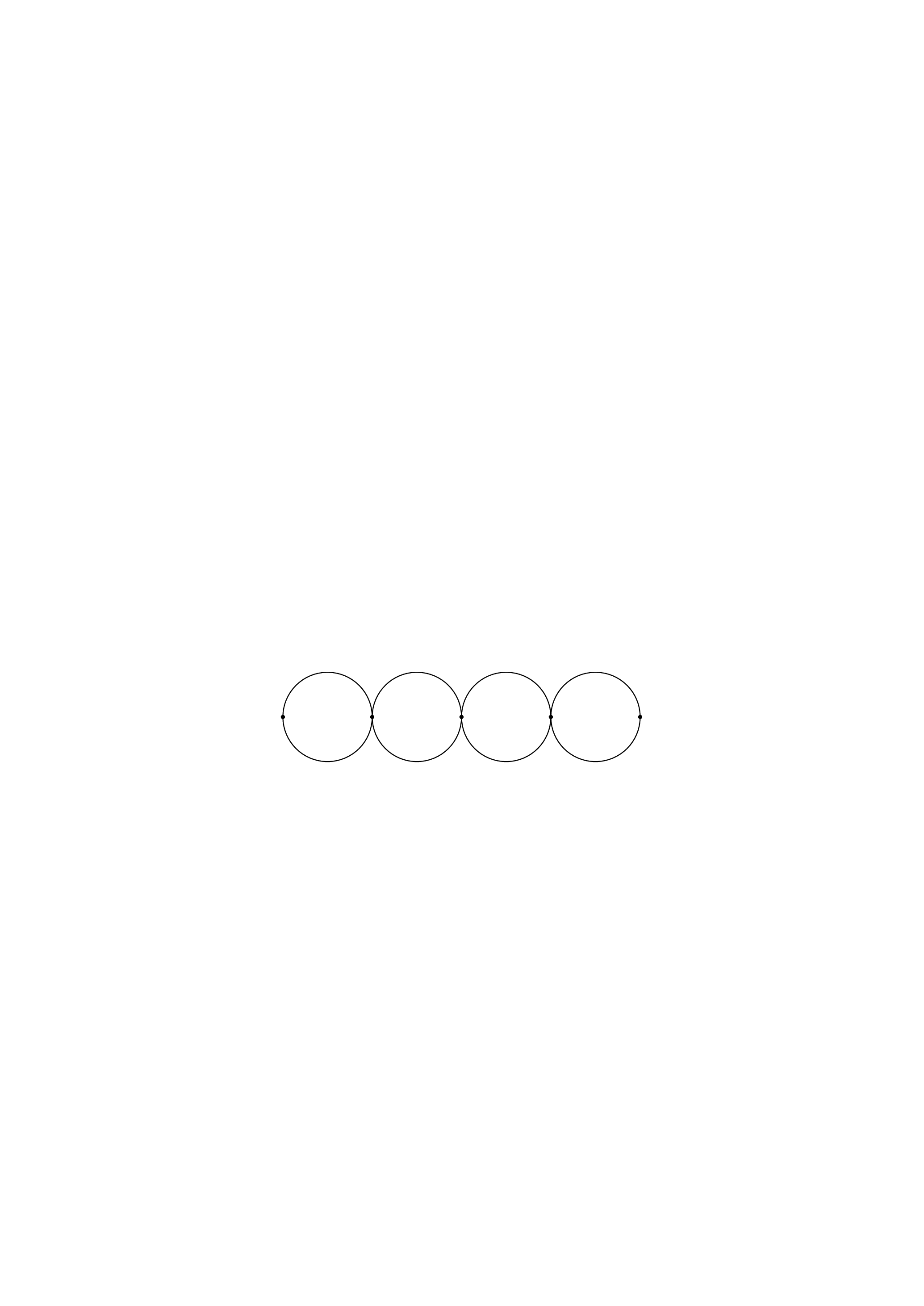}

\vspace{1.5mm}
\caption{We call the graph $H$. Black dots (resp. lines) stand for vertices (resp. edges).}
	\label{figure:H}
\end{center}
\end{figure}

Finally, we shall prove Theorem \ref{thm:tropaut}.

\begin{proof}[Proof of Theorem \ref{thm:tropaut}]
Let $V$ be the set of all points of $\Gamma$ except two valent points.
We add to $V$ all midpoints of loops.
Let $(G, l)$ be the pair defining $\Gamma$ that has the added set $V$ as its set of vertices.
Each $f \in \operatorname{Aut}(\Gamma)$ induces a permutation of the subset of $\Gamma$ corresponding to $V_G$ and that of the set of intervals of $\Gamma$ corresponding to edges of $G$.
This induces a group homomorphism $\operatorname{Aut}(\Gamma) \to \operatorname{Aut}(G)$.
Since $G$ is loopless, it is injective.
Then we have
\[
\# \operatorname{Aut}(\Gamma) \le \# \operatorname{Aut}(G).
\]
By Proposition \ref{prop:aut}, we have the desired inequality.

In each case of equality conditions of Theorem \ref{thm:tropaut}, clearly, the inequality becomes an equality.
The converse follows from Proposition \ref{prop:aut} and the fact that each $f \in \operatorname{Aut}(\Gamma)$ is an isometry $\Gamma \to \Gamma$.
\end{proof}

\end{document}